\newtheorem{proposition}{Proposition}
\theoremstyle{definition}
\theoremstyle{plane}
\def \beq{ \begin{equation} }
\def \eeq{\end{equation}}
\def \mf {\mathfrak}
\title{The non-existence of centre-of-mass and linear-momentum integrals in the curved $N$-body problem}
\begin{document}
\maketitle
\markboth{Florin Diacu}{On the integrals of the curved $N$-body problem}
\author{\begin{center}
{\bf Florin Diacu}\\
\smallskip
{\footnotesize Pacific Institute for the Mathematical Sciences\\
and\\
Department of Mathematics and Statistics\\
University of Victoria\\
P.O.~Box 3060 STN CSC\\
Victoria, BC, Canada, V8W 3R4\\
diacu@math.uvic.ca\\
}\end{center}

}

\vskip0.5cm

\begin{center}
\today
\end{center}

\begin{abstract}
We provide a class of orbits in the curved $N$-body problem for which no 
point that could play the role of the centre of mass is fixed or moves uniformly
along a geodesic. This proves that the equations of motion lack 
centre-of-mass and linear-momentum integrals.
\end{abstract}
 
%%%%%%%%
\section{Introduction}
%%%%%%%%

The importance of the centre-of-mass and linear-momentum integrals in the classical
$N$-body problem and its generalization to quasihomogeneous potentials, \cite{Diacu0}, is never enough emphasized. From integrable systems, to the Smale and Saari conjectures, \cite{Saari}, \cite{Smale}, to the study of singularities, \cite{Diacu1}, \cite{Diacu5}, these integrals have played an essential role towards answering the most basic questions of particle dynamics, in general, and celestial mechanics, in particular.
But they seem to characterize only the Euclidean space. Away from zero curvature, they disappear, probably because of the symmetry loss that occurs in the ambient space. We will show here that they don't exist in the curved $N$-body problem (the natural extension of the Newtonian $N$-body problem to spaces of constant curvature). 

This note follows from a discussion we had with Ernesto P\'erez-Chavela and Guadalupe Reyes Victoria of Mexico City. Given the absence of the above mentioned integrals in discretizations of Einstein's field equations, such as those of Levi-Civita, \cite{Civ}, \cite{Civita}, Einstein, Infeld, Hoffmann, \cite{Ein}, and Fock, \cite{Fock}, we had taken this property for granted in the curved $N$-body problem. But as we could not immediately support this claim with a counterexample, our colleagues asked if there might always exist centre-of-mass-like points in the 3-sphere, $\mathbb S^3$, and the hyperbolic 3-sphere, $\mathbb H^3$, i.e.\ space points that are either at rest or move uniformly along geodesics during the motion of the particles, as in the classical case. 

All the examples that came to mind  had centre-of-mass-like points (see Section 3), so the property we had taken for granted wasn't obvious. Therefore we began to seek orbits that failed to exhibit this feature. But before presenting the outcome of our attempts to answer this question, we need to lay some background.

The curved $N$-body problem has its roots in the work of J\'anos Bolyai, \cite{Bol},
and Nikolai Lobachevsky, \cite{Lob}. These co-founders of non-Euclidean geometry proposed, independently of each other, a 2-body problem in hyperbolic space for which the attraction is proportional to the area of the sphere of radius equal to the hyperbolic distance between the bodies. In 1870, Ernst Schering showed that a potential involving the hyperbolic cotangent of the distance agrees with this law, \cite{Sche}. Wilhelm Killing extended this idea to the sphere $\mathbb S^3$ by taking the circular tangent of the spherical distance, \cite{Kil1}, \cite{Kil10}. The topic began to attract more attention after Heinrich Liebmann proved two properties that agreed with the classical Kepler problem: the potential is a harmonic function in 3-space and every bounded orbit is closed, \cite{Ber}, \cite{Lie1}, \cite{Lie2}, \cite{Lie3}. Then Ernest Schr\"odinger and his followers exploited these ideas at the quantum level, \cite{Schr}, \cite{Inf}, \cite{InfS}. Starting with the 1990s, the Russian school of celestial mechanics further developed the 2-body case, \cite{Koz}, \cite{Koz2}, \cite{Shc}, \cite{Shc1}, \cite{Shc2}. 

In some recent work, we generalized the Schering-Killing potential to any number $N\ge 2$ of bodies, \cite{Diacu1}, \cite{Diacu3}, \cite{Diacu-mono}, \cite{Diacu-memoirs},  \cite{Diacu2}, \cite{Diacu4}, \cite{Diacu5}, \cite{Diacu6}, and used the results obtained there to argue that, for distances comparable to those of our solar system, the geometry of the physical space is Euclidean. Our method exploited the connection between geometry and dynamics, thus going beyond Gauss's attempt to determine the nature of the physical space through topographic measurements. For more details on this subject and an extensive bibliography, see \cite{Diacu-mono}, \cite{Diacu-memoirs}.

In this note we prove that, for a certain class of solutions of the curved $N$-body
problem in $\mathbb S^3$, there are no centre-of-mass-like points. In other words, 
we show that forces cannot keep any such point at rest or make it move uniformly along a great circle of $\mathbb S^3$, results that occur in Sections \ref{counterexamples} and \ref{class}. For positive curvature, our choice of $\mathbb S^3$ is not restrictive since a suitable coordinate and time-rescaling transformation makes the curvature parameter vanish from the equations of motion, \cite{Diacu-mono}, \cite{Diacu7}.  In Section \ref{motion} we introduce these differential equations and the known integrals of motion, in Section \ref{examples} we give examples of orbits with centre-of-mass-like points, and in Section \ref{compl} we set the geometric background, necessary for understanding the orbits we describe in Sections \ref{counterexamples} and \ref{class}. 

Unfortunately, a similar example in $\mathbb H^3$ still eludes us, and we don't know the size of the set of orbits that have centre-of-mass-like points in $\mathbb S^3$ or $\mathbb H^3$, but expect it to be negligible from both the measure theoretical and the topological point of view. So apart from the result presented in this note, which proves the non-existence of these integrals for positive curvature, the issue my colleagues raised leads to several interesting questions, all of which still await answers.

%%%%%%%%
\section{Equations of motion}\label{motion}
%%%%%%%%

Consider $N$ point particles (bodies) of masses $m_1,\dots, m_N>0$ moving in 
$\mathbb{S}^3$ (thought as embedded in the Euclidean space $\mathbb R^4$) or $\mathbb H^3$ (thought as embedded in the Minkowski space $\mathbb R^{3,1}$),
where
$$
{\mathbb S}^3=\{(w,x,y,z)\ | \ w^2+x^2+y^2+z^2=1\},
$$
$$
{\mathbb H}^3=\{(w,x,y,z)\ | \ w^2+x^2+y^2-z^2=-1, \ z>0\}.
$$
Then the configuration of the system is described by the $4N$-dimensional vector
$$
{\bf q}=({\bf q}_1,\dots,{\bf q}_N),
$$
where ${\bf q}_i=(w_i,x_i,y_i,z_i), i=\overline{1,N}$, denote the position vectors of the bodies.  The equations of motion are given by the second-order system
\begin{equation}
\label{both}
\ddot{\bf q}_i=\sum_{j=1,j\ne i}^N\frac{m_j[{\bf q}_j-\sigma({\bf q}_i\odot {\bf q}_j){\bf q}_i]}{[\sigma-\sigma({\bf q}_i\odot {\bf q}_j)^2]^{3/2}}-\sigma(\dot{\bf q}_i\odot \dot{\bf q}_i){\bf q}_i, \ \ i=\overline{1,N},
\end{equation}
with constraints
\begin{equation}
{\bf q}_i\odot{\bf q_i}=\sigma, \ \ {\bf q}_i\odot\dot{\bf q}_i=0, \ \ i=\overline{1,N},
\end{equation}
where $\odot$ is the standard inner product $\cdot$ of signature $(+,+,+,+)$ in $\mathbb S^3$ or the Lorentz inner product $\boxdot$ of signature $(+,+,+,-)$ in $\mathbb H^3$, and
$$
\sigma=
\begin{cases}
+1,\ \ {\rm in}\ \ {\mathbb S^3},\cr
-1,\ \ {\rm in}\ \ {\mathbb H^3},
\end{cases}
$$
denotes the signum function, \cite{Diacu-mono}, \cite{Diacu-memoirs}. System \eqref{both} has dimension $6N$. The force acting on each body has a gravitational component (the above sum) and a term (involving the velocities) that corresponds to the constraints.

As a consequence of Noether's theorem, system \eqref{both} has the scalar integral of energy,
$$
T({\bf q},\dot{\bf q})-U({\bf q})=h,
$$
where 
$$U({\bf q})=\sum_{1\le i<j\le N}\frac{\sigma m_im_j{\bf q}_i\odot{\bf q}_j}{[\sigma-\sigma({\bf q}_i\odot{q}_j)^2]^{3/2}}$$ 
is the force function ($-U$ representing the potential),
$$
T({\bf q},\dot{\bf q})=\frac{1}{2}\sum_{i=1}^Nm_i(\dot{\bf q}_i\odot\dot{\bf q}_i)
(\sigma{\bf q}_i\odot{\bf q}_i)
$$
is the kinetic energy, with $h$ representing an integration constant; and the 6-dimensional integral of the total angular momentum,
$$
\sum_{i=1}^Nm_i{\bf q}_i\wedge\dot{\bf q}_i={\bf c},
$$
where $\wedge$ is the wedge product and ${\bf c}=(c_{wx},c_{wy},c_{wz},c_{xy},c_{xz},c_{yz})$ denotes an integration vector, each component measuring the rotation of the system about the origin of the frame relative to the plane corresponding to the bottom indices.

But it is far from clear whether the centre-of-mass and linear-momentum integrals exist for system \eqref{both}. In Euclidean space, these
integrals are obtained by showing that $\sum_{i=1}^Nm_i\ddot{\bf q}_i={\bf 0}$. This identity is not satisfied in the curved $N$-body problem because $\frac{1}{M}\sum_{i=1}^Nm_i{\bf q}_i$,
where $M=m_1+\dots+m_N$, does not even belong to $\mathbb S^3$ or $\mathbb H^3$. However, it may be possible that there are points in $\mathbb S^3$ or $\mathbb H^3$ that behave like a centre of mass, i.e.\ the forces acting on them cancel each other within the manifold or make those points move uniformly along a geodesic. The next section will provide a couple of examples in which such centre-of-mass-like points exist.

%%%%%%%%
\section{Orbits with centre-of-mass-like points} \label{examples} 
%%%%%%%%

The orbits with centre-of-mass-like points we will further present are the Lagrangian
elliptic relative equilibria of equal masses in $\mathbb S^3$ and the Eulerian
hyperbolic relative equilibria of equal masses in $\mathbb H^3$. More examples
having this property can be found in \cite{Diacu-mono}, \cite{Diacu-memoirs}.

%%%%
\subsection{Lagrangian elliptic relative equilibria in $\mathbb S^3$}
%%%%

A straightforward computation shows that for $N=3$, $\sigma=1$, and $m_1=m_2=m_3=:m$, system \eqref{both} has orbits of the form
\begin{equation}
{\bf q}=({\bf q}_1,{\bf q}_2,{\bf q}_3), \ {\bf q}_i=(w_i,x_i,y_i,z_i),\ i=1,2,3,
\label{0elliptic1}
\end{equation}
\begin{align*}
w_1(t)&=r\cos\omega t,& x_1(t)&=r\sin\omega t,\\
 y_1(t)&=y\ ({\rm constant}),& z_1(t)&=z\ ({\rm constant}),\\
w_2(t)&=r\cos(\omega t+2\pi/3),& x_2(t)&=r\sin(\omega t+2\pi/3),\\
y_2(t)&=y\ ({\rm constant}),& z_2(t)&=z\ ({\rm constant}),\\
w_3(t)&=r\cos(\omega t+4\pi/3),& x_3(t)&=r\sin(\omega t+4\pi/3),\\
y_3(t)&=y\ ({\rm constant}),& z_3(t)&=z\ ({\rm constant}),
\end{align*}
with $r^2+y^2+z^2=1, r\in(0,1),$ and
$$
\omega^2=\frac{8m}{\sqrt{3}r^3(4-3 r^2)^{3/2}}.
$$
These are Lagrangian elliptic relative equilibria, i.e.\ orbits for which the bodies lie at the vertices of a rotating equilateral triangle, such that the mutual distances between bodies remain constant during the motion. The forces acting on points of the form ${\bf q}_0=(0,0,y_0,z_0)\in\mathbb S^3$, i.e.\ with $y_0^2+z_0^2=1$, cancel each other within $\mathbb S^3$. Moreover, the spherical distance, $d_{\mathbb S^3}$, from ${\bf q}_0$ to $m_1, m_2$, and $m_3$ is the same:
$$
d_{\mathbb S^3}({\bf q}_0,{\bf q}_i)=\cos^{-1}({\bf q}_0\cdot{\bf q}_i)=\cos^{-1}(y_0y+z_0z),   \ i=1,2,3.
$$

Therefore all these points (infinitely many) are centre-of-mass-like. The condition that the masses are equal is sharp: such equilateral orbits don't exist if the masses are distinct, but it can be shown that orbits of nonequal masses exist for scalene triangles, \cite{Diacu3}.

If we restrict the motion to the great 2-sphere 
$$
{\bf S}_w^2=\{(0,x,y,z)\ | \ x^2+y^2+z^2=1\},
$$
then there are only two centre-of-mass-like points, the north pole $(0,1,0,0)$
and the south pole $(0,-1,0,0)$ of ${\bf S}_w^2$. Nevertheless, the situation is still different from the Euclidean case, where the centre of mass is unique.

%%%%
\subsection{Eulerian hyperbolic relative equilibria in $\mathbb H^3$}
%%%%

A straightforward computation shows that for $N=3$, $\sigma=-1$, and $m_1=m_2=m_3=:m$, system \eqref{both} has orbits of the form
\begin{equation}
{\bf q}=({\bf q}_1, {\bf q}_2, {\bf q}_3), \ {\bf q}_i=(w_i,x_i,y_i,z_i), \ i=1,2,3,
\label{hyp-4}
\end{equation}
\begin{align*}
w_1&=0,& x_1&=0,& y_1&=\sinh\beta t,& z_1&=\cosh\beta t,\\
w_2&=0,& x_2&=x\ {\rm (constant)},& y_2&=\eta\sinh\beta t,& z_2&=\eta\cosh\beta t,\\
w_3&=0,& x_3&=-x\ {\rm (constant)},& y_3&=\eta\sinh\beta t,& z_3&=\eta\cosh\beta t,
\end{align*}
with $x^2-\eta^2=-1,\ \eta>1$, and 
$$
\beta^2=\frac{1+4\eta^2}{4\eta^3(\eta^2-1)^{3/2}}.
$$
These are Eulerian hyperbolic relative equilibria, orbits lying on a geodesic of $\mathbb H^3$ that rotates hyperbolically, which means that the mutual distances between bodies remain constant during the motion.

Points of the form ${\bf q}_*=(w_*,0,\rho_*\sinh\beta t,\rho_*\cosh\beta t)$, with $w_*^2-\rho_*^2=-1$, move uniformly along the geodesic $w=w_*, x=0$ in $\mathbb H^3$. The hyperbolic distance, $d_{\mathbb H^3}$, between ${\bf q}_*$ and $m_1$ is constant along the motion because
$$
d_{\mathbb H^3}({\bf q}_*,{\bf q}_1)=\cosh^{-1}(-{\bf q}_*\boxdot{\bf q}_1)=\cosh^{-1}\rho_*.
$$
The same holds for the distance between ${\bf q}_*$ and $m_2$ or $m_3$,
$$
d_{\mathbb H^3}({\bf q}_*,{\bf q}_i)=\cosh^{-1}(-{\bf q}_*\boxdot{\bf q}_i)=\cosh^{-1}\eta\rho_*, \  i=2,3.
$$
Consequently ${\bf q}_*$ is centre-of-mass-like. For $w_*=0$, which yields $\rho_*=1$, the point ${\bf q}_*$ overlaps with $m_1$. This fact shows that when we restrict the motion to the great hyperbolic 2-sphere,
$$
{\bf H}_w^2=\{(0,x,y,z)\ | \ x^2+y^2-z^2=-1\},
$$
there is only one centre-of-mass-like point, which is identical to $m_1$.

%%%%%%%%
\section{Complementary circles in $\mathbb S^3$}\label{compl}
%%%%%%%%

In order to produce a class of orbits with no centre-of-mass-like points, we will introduce the following geometric concept. Two great circles $C_1$ and $C_2$ of two distinct great 2-spheres of $\mathbb S^3$ are called complementary if there is a coordinate system in which they can be represented as 
$$
C_1=\{(w,x,y,z)\ | \ w=x=0\ {\rm and} \ y^2+z^2=1\},
$$
$$
C_2=\{(w,x,y,z)\ | \ y=z=0\ {\rm and} \ w^2+x^2=1\}.
$$
In topological terms, $C_1$ and $C_2$ form a Hopf link in a Hopf fibration, which is a map
$$
{\mathcal H}\colon{\mathbb S}^3\to{\mathbb S}^2, \ h(w,x,y,z)=(w^2+x^2-y^2-z^2,2(wz+xy),2(xz-wy))
$$
that takes circles of $\mathbb S^3$ to points of $\mathbb S^2$,
\cite{Hopf}, \cite{Lyons}. In particular, ${\mathcal H}(C_1)=(1,0,0)$ and ${\mathcal H}(C_2)=(-1,0,0)$. Using the stereographic projection, it can be shown that the circles $C_1$ and $C_2$ are linked (like the adjacent rings of a chain), hence the name of the pair, \cite{Lyons}.

The spherical distance  between two complementary circles is constant. Indeed, if ${\bf a}=(0,0,y,z)\in C_1$ and ${\bf b}=(w,x,0,0)\in C_2$, then ${\bf a}\cdot{\bf b}=0$, so
$d_{\mathbb S^3}({\bf a},{\bf b})=\pi/2.$
This unexpected fact turns out to be even more surprising from the dynamical point of view: the magnitude of the gravitational force (but not its direction) between a body lying on a great circle and a body lying on the complementary great circle is the same, no matter where the bodies are on their respective circles. 

%%%%%%%%
\section{Non-existence of linear-momentum and centre-of-mass integrals}\label{counterexamples}
%%%%%%%%

We can now construct an example in the 6-body problem in $\mathbb S^3$ in which three bodies of equal masses move along a great circle at the vertices of an equilateral triangle, while the other 3 bodies of masses equal to those of the previous bodies move along a complementary circle of another great sphere, also at the vertices of an equilateral triangle. So take the masses 
$m_1=m_2=m_3=m_4=m_5=m_6=:m>0$ and the frequencies $\alpha, \beta\ne 0$, which, in general, are distinct, $\alpha\ne\beta$.
Then a candidate for a solution as described above has the form
\begin{equation}
{\bf q}=({\bf q}_1,{\bf q}_2, {\bf q}_3, {\bf q}_4, {\bf q}_5, {\bf q}_6),\ {\bf q}_i=(w_i,x_i,y_i,z_i),\ i=\overline{1,6},
\label{sol}
\end{equation}
\begin{align*}
w_1&=\cos\alpha t, & x_1&=\sin\alpha t,\\ 
y_1&=0, & z_1&=0,\displaybreak[0]\\\
w_2&=\cos(\alpha t+2\pi/3), & x_2&=\sin(\alpha t +2\pi/3),\\ y_2&=0, & z_2&=0,\displaybreak[0]\\
w_3&=\cos(\alpha t+4\pi/3), & x_3&=\sin(\alpha t +4\pi/3),\\ y_3&=0, & z_3&=0,\displaybreak[0]\\
w_4&=0, & x_4&=0, \\ y_4&=\cos\beta t, & z_4&=\sin\beta t,\displaybreak[0]\\
w_5&=0, & x_5&=0,\\ y_5&=\cos(\beta t+2\pi/3), & z_5&=\sin(\beta t+2\pi/3),\displaybreak[0]\\
w_6&=0, & x_6&=0,\\ y_6&=\cos(\beta t+4\pi/3), & z_6&=\sin(\beta t+4\pi/3).
\end{align*}
For $t=0$, we obtain a fixed-point configuration, ${\bf q}(0)$, specific to ${\mathbb S}^3$, in the sense that there is no 2-sphere that contains it:
\begin{align*}
w_1&=0,& x_1&=1,& y_1&=0,& z_1&=0,\displaybreak[0]\\\
w_2&=-\frac{1}{2},& x_2&=\frac{\sqrt{3}}{2},& y_2&=0,& z_2&=0,\displaybreak[0]\\
w_3&=-\frac{1}{2},& x_3&=-\frac{\sqrt{3}}{2},& y_3&=0,& z_3&=0,\displaybreak[0]\\
w_4&=0,& x_4&=0,& y_4&=1,& z_4&=0,\displaybreak[0]\\
w_5&=0,& x_5&=0,& y_5&=-\frac{1}{2},& z_5&=\frac{\sqrt{3}}{2},\displaybreak[0]\\
w_6&=0,& x_6&=0,& y_6&=-\frac{1}{2},& z_6&=-\frac{\sqrt{3}}{2}.
\end{align*}
A long but straightforward computation shows that \eqref{sol} is solution
of system \eqref{both} for $N=6$, $\sigma=1$, and any $\alpha,\beta\ne 0$. If $\alpha$ and $\beta$ are rational multiples of $2\pi$, a set of frequency pairs that has measure zero in $\mathbb R^2$, the corresponding orbits are periodic. But in general $\alpha/\beta$ is a irrational, so the orbits are quasiperiodic.
The angular momentum constants are 
$$
c_{wx}=3m\alpha\ne 0,\ \ c_{yz}=3m\beta\ne 0, \ \
c_{wy}=c_{wz}=c_{xy}=c_{xz}=0,
$$
which means that the rotation of the system takes place around the origin of the coordinate system only relative to the planes $wx$ and $yz$.

We can now state and prove the main result of this note.

%%%%% PROPOSITION
\begin{proposition}
For a solution \eqref{sol} of system \eqref{both}, with $N=6,\ \sigma=1$, and $\alpha/\beta$ irrational, there is no point in $\mathbb S^3$ such that the forces acting on it vanish in $\mathbb S^3$\! or make it move uniformly along a geodesic.
\end{proposition}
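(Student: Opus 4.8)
The plan is to reduce the statement to a single assertion about the total gravitational force field generated by the six bodies,
\[
\mathbf{F}(\mathbf{q}_0,t)=\sum_{j=1}^{6}\frac{m\,[\mathbf{q}_j-(\mathbf{q}_0\cdot\mathbf{q}_j)\,\mathbf{q}_0]}{[1-(\mathbf{q}_0\cdot\mathbf{q}_j)^2]^{3/2}},
\]
which is automatically tangent to $\mathbb S^3$ at $\mathbf{q}_0$. A point at rest has $\ddot{\mathbf{q}}_0=\dot{\mathbf{q}}_0=\mathbf 0$, while a point moving uniformly along a great circle satisfies the free-geodesic relation $\ddot{\mathbf{q}}_0=-(\dot{\mathbf{q}}_0\cdot\dot{\mathbf{q}}_0)\mathbf{q}_0$; inserting either into the test-particle equation $\ddot{\mathbf{q}}_0=\mathbf{F}-(\dot{\mathbf{q}}_0\cdot\dot{\mathbf{q}}_0)\mathbf{q}_0$ forces $\mathbf{F}(\mathbf{q}_0(t),t)=\mathbf 0$ for all $t$. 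Thus it suffices to show that no point, fixed or uniformly tracing a great circle, keeps $\mathbf F$ identically zero.

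Writing $\mathbf{q}_0=(\rho_1\cos\phi_1,\rho_1\sin\phi_1,\rho_2\cos\phi_2,\rho_2\sin\phi_2)$ with $\rho_1^2+\rho_2^2=1$, I would split $\mathbf F$ into its $wx$- and $yz$-components and, within each plane, into radial and tangential parts. Since bodies $1,2,3$ lie on $C_2$ and bodies $4,5,6$ on the complementary circle $C_1$, the cross-plane contributions turn out to be purely radial, so the two tangential (in-circle) components of $\mathbf F$ reduce to the single-triangle scalars
\[
G_1^\theta=\sum_{k=0}^{2}\frac{\sin\gamma_{1,k}}{(1-\rho_1^2\cos^2\gamma_{1,k})^{3/2}},\qquad
G_2^\theta=\sum_{k=0}^{2}\frac{\sin\gamma_{2,k}}{(1-\rho_2^2\cos^2\gamma_{2,k})^{3/2}},
\]
with $\gamma_{1,k}=\alpha t-\phi_1+2\pi k/3$ and $\gamma_{2,k}=\beta t-\phi_2+2\pi k/3$, and $\mathbf F=\mathbf 0$ forces $G_1^\theta\equiv G_2^\theta\equiv0$.

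The crux is a lemma asserting that for every $\rho\in(0,1)$ the tangential force of an equilateral triangle vanishes only at its symmetry directions, i.e.\ $G_1^\theta(\rho,\gamma)=0$ if and only if $\gamma\in\frac{\pi}{3}\mathbb Z$, which I would prove by collecting the sum over a common denominator and controlling the sign of the resulting numerator (the function is odd and $\tfrac{2\pi}{3}$-periodic, so one only needs $G_1^\theta>0$ on $(0,\pi/3)$). Granting this, $G_1^\theta\equiv0$ confines $\alpha t-\phi_1(t)$ to the discrete set $\frac{\pi}{3}\mathbb Z$ \emph{independently of} $\rho_1(t)$, so by continuity it is constant and $\dot\phi_1\equiv\alpha$; likewise $\dot\phi_2\equiv\beta$. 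But along a geodesic the angular-momentum components $\rho_1^2\dot\phi_1=w_0\dot x_0-x_0\dot w_0$ and $\rho_2^2\dot\phi_2$ are conserved, so $\rho_1^2\alpha$ and $\rho_2^2\beta$ are constant and hence $\rho_1,\rho_2$ are constant; the great circle then lies on a Clifford torus and is therefore traversed with equal angular speeds $|\dot\phi_1|=|\dot\phi_2|$, giving $|\alpha|=|\beta|$ and contradicting the irrationality of $\alpha/\beta$. The degenerate circles $C_1,C_2$ (where $\rho_1$ or $\rho_2$ vanishes) are treated separately: on $C_1$ the force of bodies $1,2,3$ cancels because $\mathbf q_1+\mathbf q_2+\mathbf q_3=\mathbf 0$, while the in-circle force of $4,5,6$ is nonconstant and vanishes only at the midpoints, which are antipodal to the third body and hence singular, so no regular force-free point survives there, and symmetrically on $C_2$.

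The main obstacle is the lemma: proving that $G_1^\theta$ has no zeros beyond the symmetry directions \emph{uniformly in} $\rho\in(0,1)$ is exactly what lets the single identity $G_1^\theta\equiv0$ pin the angular velocity to $\alpha$ even when $\rho_1(t)$ is allowed to vary, thereby covering both the isoclinic great circles and the $\rho$-varying ones in one stroke. Everything downstream—co-rotation, conservation of the angular-momentum bivector, the rigidity of great circles lying on a Clifford torus, and the final clash with $\alpha/\beta\notin\mathbb Q$—is then routine.
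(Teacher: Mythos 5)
Your argument is correct in outline but follows a genuinely different route from the paper's. The paper never computes the force on a candidate point in the generic case: it regards the solution as the orbit of the fixed configuration $\mathbf{q}(0)$ under the one-parameter subgroup $A(t)$ of $SO(4)$ in \eqref{matrix}, implicitly assumes a centre-of-mass-like point must be carried along by $A(t)$, notes that $A(t)$ fixes no point of $\mathbb S^3$, and shows that the only $A$-orbits lying in a hyperplane through the origin (hence on a great circle) are, by the irrationality of $\alpha/\beta$, the complementary circles $C_1$ and $C_2$ themselves, which are then excluded by a short equidistance remark. You instead test an arbitrary point against the actual force field $\mathbf F$, reduce ``at rest or uniform along a geodesic'' to $\mathbf F\equiv\mathbf 0$ along the trajectory, and extract the contradiction $|\alpha|=|\beta|$ from the tangential components. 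This buys something real: it does not presuppose that the candidate point is equivariantly dragged by $A(t)$, so it addresses the literal statement for arbitrary trajectories (including great circles not invariant under $A$), and it replaces the paper's informal dismissal of $C_1$ and $C_2$ with an explicit computation ($\mathbf q_1+\mathbf q_2+\mathbf q_3=\mathbf 0$ kills the cross-circle force, and the in-circle sum $\csc^2u+\csc^2(\pi/3-u)-\csc^2(\pi/3+u)$ is strictly positive on $(0,\pi/3)$, so no regular point survives). The price is the lemma that $G^\theta(\rho,\cdot)$ vanishes only on $\frac{\pi}{3}\mathbb Z$ for every $\rho\in(0,1)$; the statement is true (for instance $G^\theta(\rho,\pi/6)=(1-3\rho^2/4)^{-3/2}-1>0$, and the small-$\rho$ expansion gives $\tfrac98\rho^2\sin 3\gamma$), and your odd/$\tfrac{2\pi}{3}$-periodic reduction to positivity on $(0,\pi/3)$ is the right way to prove it, but it is the one load-bearing step you must actually carry out; the downstream pieces (conservation of $w\dot x-x\dot w$ along a great circle, the fact that a great circle on a Clifford torus with both radii nonzero forces equal angular speeds) check out as routine.
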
 
%%%%%
\begin{proof}
A solution \eqref{sol} of system \eqref{both} can be viewed as obtained from the action of an element of the Lie group $SO(4)$ on the fixed-point configuration ${\bf q}(0)$. Without loss of generality, we can choose a suitable basis to give this element of the group the matrix representation 
\begin{equation}\label{matrix}
A=\begin{pmatrix}
\cos\gamma t & -\sin\gamma t & 0 & 0\\
\sin\gamma t & \cos\gamma t & 0 & 0\\
0 & 0 & \cos\delta t & -\sin\delta t\\
0 & 0 & \sin\delta t & \cos\delta t
\end{pmatrix},
\end{equation}
with $\gamma,\delta\ne 0$ compatible with $\alpha,\beta\ne 0$. Then the matrix $A$ rotates all the points of $\mathbb S^3$, including the bodies $m_1,\dots, m_6$, such that the latter form a solution of the equations of motion.

The only point of $\mathbb R^4$ that stays put under the action of  $A$, when $\gamma,\delta\ne 0$, is the origin, $\bf 0$, of the coordinate system. But ${\bf 0}\notin\mathbb S^3$, so no point in $\mathbb S^3$ stays fixed.

We still need to prove that no candidate for a centre-of-mass-like point of this 6-body problem  can move uniformly along a geodesic of $\mathbb S^3$ under the action of $A$. For this purpose, consider a point ${\mathfrak q}_0=(\mf w_0, \mf x_0, \mf y_0, \mf z_0)$
in $\mathbb S^3$.
Then the components of the vector $A{\mathfrak q}_0^T$, where the upper $^T$\! represents the transposed, are:
\begin{align*}
\mf w(t)&=\mf w_0\cos\gamma t -\mf x_0\sin\gamma t, &  \mf x(t)&=\mf w_0\sin\gamma t + \mf x_0\cos\gamma t,\\
\mf y(t)&=\mf y_0\cos\delta t -\mf z_0\sin\delta t, &  \mf z(t)&=\mf y_0\sin\delta t + \mf z_0\cos\delta t.
\end{align*}
But since $\gamma$ and $\delta$ are compatible with $\alpha$ and $\beta$,
we have
$$
\mf w(t)=r_0\cos\alpha t,\ \mf x(t)=r_0\sin\alpha t,\ \mf y(t)=\rho_0\cos\beta t, \ \mf z(t)=\rho_0\sin\beta t,
$$
where $r_0=(\mf w_0^2+ \mf x_0^2)^{1/2}$ and $\rho_0=(\mf y_0^2+ \mf z_0^2)^{1/2}$, with
$r_0^2+\rho_0^2=1$.

To determine when the curve ${\mathfrak q}=(\mf{w,x,y,z})$, described by the point $
(\mf w_0, \mf x_0, \mf y_0, \mf z_0)$ under the action of $A$, describes a geodesic, it is necessary that the point moves on a great sphere of $\mathbb S^3$, i.e.\ the function $\mf q$ that draws the trajectory satisfies the conditions
$$
\mf w^2+ \mf x^2+ \mf y^2+ \mf z^2=1\ \ {\rm and}\ \ A\mf w+B \mf x+C \mf y+D\mf z=0,
$$
where not all the real coefficients $A,B,C,D$ are zero. The first condition, which asks that $\mf q$ lies
in $\mathbb S^3$, is obviously satisfied. The second condition, which requires that $\mf q$ lies in a hyperplane passing through the origin of the coordinate system, translates into
$$
Ar_0\cos\alpha t + Br_0\sin\alpha t + C\rho_0\cos\beta t + D\rho_0\sin\beta t =0.
$$
Since $\alpha/\beta$ is not rational, the above equality holds for all $t$ only if
$$
r_0(A\cos\alpha t + B\sin\alpha t)=\rho_0(C\cos\beta t + D\sin\beta t)=0,
$$
which is equivalent to either 

(i) $r_0=0$ and $C=D=0$ or 

(ii) $\rho_0=0$ and $A=B=0$.

\noindent As $r_0=0$ implies $w_0=x_0=0$ and $\rho_0=1$, and $\rho_0=0$ implies $y_0=z_0=0$ and $r_0=1$, the trajectory $\mathfrak q$ of ${\mathfrak q}_0$ under the action of $A$ can be either in agreement with (i) or with (ii), but not with both, i.e.\ ${\mf q}(t)$ has the form
$$
\begin{pmatrix}
0\\
0\\
\cos\beta t\\
\sin\beta t
\end{pmatrix}
\ \ {\rm or}\ \
\begin{pmatrix}
\cos\alpha t\\
\sin\alpha t\\
0\\
0
\end{pmatrix}.
$$
Therefore the only geodesics lying on great spheres that remain invariant (globally, not point-wise) under the action of $A$, with $\gamma,\delta\ne 0$, are the complementary great circles 
$$
C_1=\{(0,0,y,z)\ | \ y^2+z^2=1\}\ \ {\rm and}\ \  C_2=\{(w,x,0,0)\ | \ w^2+x^2=1\}.
$$

Any point moving uniformly on $C_1$ is equidistant from any point of $C_2$, so it could act like a centre of mass for the bodies rotating on $C_2$, but it does not qualify as
a centre of mass for the bodies rotating on $C_1$, so it cannot be a centre of mass for the entire system. The same is true if the roles of $C_1$ and $C_2$ are interchanged. Since no other points rotate on geodesics, these remarks complete the proof.
\end{proof}

%%%%%%%%
\section{A class of counterexamples}\label{class}
%%%%%%%%

The example presented in Section 5 can be extended to a larger class of orbits with no centre-of-mass-like points. Consider first the $(N+M)$-body problem (with $N,M\ge 3$ and odd) of equal masses, in which $N$ bodies rotate along a great circle of a great sphere at the vertices of a regular $N$-gon, while the other $M$ bodies rotate along a complementary great circle of another great sphere at the vertices of a regular $M$-gon. The same as in the 6-body problem discussed above, the rotation takes place around the origin of the coordinate system only relative to two out of six reference planes. The condition that $N$ and $M$ are odd is imposed to avoid antipodal configurations, which introduce singularities, \cite{Diacu1}, \cite{Diacu5}. The existence of the orbits we just described follows from \cite{Diacu3}, \cite{Diacu-mono}, \cite{Diacu-memoirs}. The mass-equality condition can be relaxed, a case in which the polygons are not regular anymore. For suitable polygonal shapes and well chosen masses, we can also eliminate the condition that $M$ and $N$ are odd. The non-existence proof given in Proposition 1 works similarly in the general case. Consequently the above class of orbits covers any finite number of bodies, and shows that, in the curved $N$-body problem in $\mathbb S^3$ there are no centre-of-mass and linear-momentum integrals.

\medskip

\noindent{\bf Acknowledgment.} The research presented in this note was supported in part by a Discovery Grant the author received from NSERC of Canada.

%%%%%%
%%%%%%          BIBLIOGRAPHY
%%%%%%

\end{document}